\theoremstyle{plain}
\newtheorem{theorem}{Theorem}
\newtheoremstyle{derp}
{3pt}
{3pt}
{}
{}
{\upshape}
{:}
{.5em}
{}
\theoremstyle{derp}
\newcommand{\Z}{\mathbb{Z}}
\newcommand{\ID}{\mathrm{id}}
\newcommand{\RCA}{\mathrm{RCA}}
\newcommand{\Alt}{\mathrm{Alt}}
\newcommand{\ctrl}[2]{\left.{#1}\right\vert_{#2}}
\title{Universal CA groups with few generators}
\author{
Ville Salo \\
vosalo@utu.fi
}
\begin{document}
\maketitle

\begin{abstract}
There exist f.g.-universal cellular automata groups which are quotients of $\Z * \Z_2$ or $\Z_2 * \Z_2 * \Z_2$, as previously conjectured by the author.
\end{abstract}

The following was stated in \cite{Sa18a}: {\textquote{We conjecture that three involutions can generate an f.g.-universal group of RCA.}} We confirm this, and also minimize the size of generating sets for f.g.-universal cellular automata groups. 

The group $\RCA(m)$ is the group of self-homeomorphisms $f$ of $\{0,1,..,m-1\}^\Z$ satisfying $f \circ \sigma = \sigma \circ f$, where $\sigma(x)_i = x_{i+1}$ is the left shift.

\begin{theorem}
Let $G' \in \{\Z * \Z_2, \Z_2 * \Z_2 * \Z_2\}$ and let $m, n \geq 2$ be arbitary. There is a homomorphism $\phi : G' \to \RCA(m)$ such that $\phi(G')$ contains an embedded copy of every finitely-generated group of $\RCA(n)$.
\end{theorem}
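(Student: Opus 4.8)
The plan is to capture a single universal group and then shrink the generating set of an overgroup of it, without ever leaving the world of reversible cellular automata. By \cite{Sa18a} there is a finitely generated group $U$ which is f.g.-universal for groups of reversible cellular automata: every finitely generated subgroup of $\RCA(n)$ embeds into $U$. Moreover the constructions there are flexible in the alphabet size, so a finitely generated group is (isomorphic to) a group of reversible cellular automata precisely when it embeds into $\RCA(M)$ for any single $M \ge 2$; in particular a copy of $U$ lies inside $\RCA(m)$. Since $\Z * \Z_2 = \langle x, y \mid y^2\rangle$ and $\Z_2 * \Z_2 * \Z_2 = \langle x, y, z \mid x^2, y^2, z^2\rangle$ map onto any group generated, respectively, by one element together with an involution, or by three involutions, it suffices to exhibit a subgroup $V \le \RCA(m)$ that contains a copy of $U$ and is generated by such a tuple; then $\phi$ is the quotient map $G' \twoheadrightarrow V$, and every finitely generated subgroup of $\RCA(n)$ embeds into $U \le V = \phi(G')$.

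Take first $G' = \Z * \Z_2$. By the embedding theorem of B.H.\ Neumann (and its refinements), $U$ embeds into a group $G$ generated by an infinite-order element $g$ together with an involution $t$; such a $G$ is produced from $U$ by finitely many free products with cyclic groups and HNN extensions over finitely generated subgroups. The essential point is that each of these operations stays within $\RCA$: a free product, or an HNN extension over a finitely generated associated subgroup, of groups of reversible cellular automata is again realizable by reversible cellular automata — the stable letter of an HNN extension being implemented by a ``conveyor-belt'' partial shift that transports the relevant data into and out of a bounded zone in which the base group acts. (This closure of $\RCA$ groups under HNN extensions is of the kind established by the methods of \cite{Sa18a}; if it is not quoted directly it can be proved here along the same lines.) Thus $G$ is again a finitely generated group of reversible cellular automata, hence embeds into $\RCA(m)$; its image $V_1$ contains a copy of $U$ and is generated by an element and an involution, so $V_1$ is a quotient of $\Z * \Z_2$. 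This settles the case $G' = \Z * \Z_2$.

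For $G' = \Z_2 * \Z_2 * \Z_2$ it remains to write the infinite-order generator as a product of two involutions of $\RCA(m)$, and this is the one point where the cellular-automaton setting genuinely differs from the abstract one. We arrange it by realizing the last HNN extension with a \emph{balanced} conveyor belt: one that shifts two auxiliary tracks $A$ and $B$ in opposite directions, so that (schematically) $g = \sigma_A \sigma_B^{-1}$. Then the cellular automaton $\rho$ swapping tracks $A$ and $B$ is an involution with $\rho g \rho = \sigma_B \sigma_A^{-1} = g^{-1}$, so $\rho g$ is again an involution and $g = \rho \cdot (\rho g)$. Hence $V_1 = \langle g, t\rangle \le V_2 := \langle \rho,\ \rho g,\ t\rangle \le \RCA(m)$, where $V_2$ is generated by three involutions and still contains a copy of $U$, and $\phi \colon \Z_2 * \Z_2 * \Z_2 \twoheadrightarrow V_2$ works. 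The balancedness is essential: an honest nonzero power of $\sigma$ can never be inverted by any element of $\RCA(m)$, since every cellular automaton commutes with $\sigma$; it is the two oppositely moving tracks that make the infinite-order generator a product of involutions.

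\textbf{The main obstacle} is to make the Neumann-style packing genuinely cellular-automaton-theoretic rather than merely abstract: the free-product and, above all, the HNN steps must be executed with reversible gadgets (conveyor belts, markers, an address/selector mechanism), while simultaneously (a) keeping the conveyor that implements the last HNN extension balanced, so that its stable letter is a product of involutions; (b) controlling every relation introduced so that the embedded copy of $U$ is not collapsed; and (c) checking that the base group is really recovered inside the two- or three-generator overgroup. The remaining ingredients — the universal group $U$ and its alphabet-flexibility, the re-embedding into $\RCA(m)$, and the reduction from $\Z * \Z_2$ to $\Z_2 * \Z_2 * \Z_2$ — are either quoted from \cite{Sa18a} or elementary.
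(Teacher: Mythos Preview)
Your proposal has a genuine gap, and you yourself identify it in the last paragraph: the closure of $\RCA$ groups under HNN extensions over finitely generated subgroups. You assert that ``each of these operations stays within $\RCA$'' and that this ``is of the kind established by the methods of \cite{Sa18a}; if it is not quoted directly it can be proved here along the same lines.'' But it is not quoted there, and you do not prove it. This closure property is exactly the content of your argument for $G' = \Z * \Z_2$; without it, nothing survives. Realizing an \emph{arbitrary} HNN extension $G*_{\psi}$ by a conveyor belt requires, at minimum, that the isomorphism $\psi$ between the associated subgroups be induced by conjugation in some $\RCA$ group, and there is no mechanism in your sketch ensuring this. The Neumann-style reduction to two generators also uses quite specific associated subgroups, and you do not verify that these particular HNN steps can be made cellular.

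The paper avoids this entirely. Rather than embedding the universal group $U$ abstractly into a $2$-generated group and then trying to realize that overgroup by cellular automata, it writes down two explicit RCA --- the partial shift $\sigma_1$ on one track, and a single controlled involution $f_0 = \ctrl{F}{[w]_0}$ built from a universal reversible gate $F$ --- and proves directly, via commutator and conjugation identities for controlled permutations such as $[\ctrl{\pi_1}{[w]_i}, \ctrl{\pi_2}{[w]_{i+\ell}}] = \ctrl{[\pi_1,\pi_2]}{[ww]_i}$, that $\langle \sigma_1, f_0\rangle$ already contains all the maps $\ctrl{\pi}{[w]_i}$ and $\ctrl{\pi}{[ww]_i}$ required by the universality criterion of \cite{Sa18a}. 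No HNN extensions, no Neumann embedding, no closure properties.

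Your treatment of $G' = \Z_2 * \Z_2 * \Z_2$ is close in spirit to the paper's: both split a ``balanced'' two-track shift $\sigma_0^{-1}\times\sigma_1$ as a product of two involutions. In the paper this works immediately because the infinite-order generator was already a partial shift; in your version it again hinges on the unestablished HNN step.
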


\begin{proof}
First consider $G' = \Z * \Z_2$. To show this for all $m, n \geq 2$, it suffices to show it for some $m, n \geq 2$, by \cite{KiRo90}. We let $B$ with $|B| \geq 2$ be arbitrary and $C = \{0, 1\}$ and use the alphabet $A = B \times C$, with $B^\Z$ the ``top track'' and $C^\Z$ the ``bottom track''. By \cite{Sa18a}, there exists a finitely-generated group $H$ of cellular automata containing a copy of every finitely-generated group of cellular automata. By Lemma~7 in \cite{Sa18a} (more precisely, its proof), for any large enough $\ell$ and unbordered word $|w| = \ell$, if a group $G \leq \RCA(B \times C)$ contains
\[ \ctrl{\pi}{[w]_i} \mbox{ and } \ctrl{\pi}{[ww]_i} \]
for all $\pi \in \Alt(\{0,1\}^\ell)$ and all $i \in \Z$, then $G$ contains a copy of $H$. The notation $\ctrl{\pi}{[u]_i}$ is as in Definition~2 of \cite{Sa18a}, and means that we apply $\pi$ on the second track if and only if $u$ appears on the first track, with offset $i$.

Now, let $w \in B^\ell$ be unbordered where $\ell$ is as above, and very large. We construct a $2$-generated group $G$ containing the maps $\ctrl{\pi}{[w]_i}$ and $\ctrl{\pi}{[ww]_i}$, such that one of our generators is an involution.

Let $F : \{0,1\}^n \to \{0,1\}^n$ be a function such that $F^2 = \ID|_{\{0,1\}^n}$ and defining $f : \{0,1\}^\Z \to \{0,1\}^\Z$ by $f(x.wy) = x.F(w)y$, the maps $\sigma^i \circ f \circ \sigma^{-i}$ generate the group of all self-homeomorphisms $g$ of $\{0,1\}^\Z$ for which there exists $m$ such that
\[ \forall x \in \{0,1\}^\Z: \forall |i| \geq m: g(x)_i = x_i \]
holds. Such $F$ exists \cite{Sa18c}.

Our generators are the partial shift on the first track, i.e. $\sigma_1(x, y) = (\sigma(x), y)$, and the map $f_0 = \ctrl{F}{[w]_0}$. Let
\[ G = \langle \sigma_1, \ctrl{F}{[w]_0} \rangle. \]
Note that $f_i = \ctrl{F}{[u]_{-i}} = f_0^{\sigma_1^i} \in G$.

Let $F'$ be any finite set of even permutations of sets of the form $\{0,1\}^k$ such that every even permutation of $\{0,1\}^m$ for any large enough $m$ can be decomposed into application of permutations in $F'$ in contiguous subsequences $\{i, i+1, ..., i+k-1\}$ of the indices $\{0,1,...,m-1\}$. It is well-known that there exist such universal reversible gate sets. Note that $\{F\}$ need not be such a set: we may need to use more than $m$ coordinates to build permutations of $\{0,1\}^m$ using translates of $F$.

For any $i$, since $w$ is unbordered and of length $\ell$, the maps $f_i, f_{i+1}, ..., f_{i+\ell-n}$ compose in the natural way, just like translates of $F$ inside $\{0,1\}^\ell$. By universality of $F$, as long as $\ell$ is large enough, the maps $\ctrl{f'}{[w]_i}$, $f' \in F'$, are generated. By the universality property of $F'$, we have $\ctrl{\pi}{[w]_i} \in G$ for all $\pi \in\Alt(\{0,1\}^\ell)$.

Now, we need to show that also $\ctrl{\pi}{[ww]_i} \in G$. For this, pick a large \emph{mutually unbordered} set $U \subset \{0,1\}^\ell$, i.e.\ any set such that $u_1, u_2 \in U$ have no nontrivial overlaps. For example we can pick $U = 0^{\ell-k-2} 1 \{0,1\}^k 1$
for any $k$ such that $k < \frac{\ell-4}{2}$. By the above, we can perform any even permutation of $U$ under occurrences of $w$. For two permutations $\pi_1, \pi_2 \in \Alt(\{0,1\}^\ell)$, with supports contained in $U$, a direct computation shows
\[ [ \ctrl{\pi_1}{[w]_i}, \ctrl{\pi_2}{[w]_{i + \ell}} ] = \ctrl{[\pi_1, \pi_2]}{[ww]_i}, \]
so for $|U| \geq 5$ ($\ell$ has to be large enough for this) we have
$\ctrl{\pi}{[ww]_i} \in G$
for all $\pi \in \Alt(\{0,1\}^\ell)$ with support contained in $U$.

For two permutations $\pi_1, \pi_2 \in \Alt(\{0,1\}^\ell)$, a direct computation shows
\[ (\ctrl{\pi_1}{[ww]_i})^{\ctrl{\pi_2}{[w]_i}} = \ctrl{(\pi_1^{\pi_2})}{[ww]_i} \]
so, since $\Alt(\{0,1\}^\ell)$ is simple (supposing $\ell \geq 3$), $G$ in fact contains $\ctrl{\pi}{[ww]_i} \in G$ for all $\pi \in \{0,1\}^\ell$. This concludes the proof since $G$ is clearly a quotient of $G' = \Z * \Z_2$, as it was generated by an RCA of infinite order and an involution.

Let us then show the claim for $G' = \Z_2 * \Z_2 * \Z_2$. For this, pick $B = \{0,1\}$ and add a third component $B' = \{0,1\}$ on top, so the alphabet becomes $A = B' \times B \times C$, $m = 8$. Thinking of $x \in (B' \times B \times C)^\Z$ as having three binary tracks, and writing $\sigma_0$ and $\sigma_1$ for the shifts on the first two tracks, it is easy to see that $\sigma_0^{-1} \times \sigma_1$ is the composition of two involutions, say $\sigma_0^{-1} \times \sigma_1 = a \circ b$.

In the proof of universality in \cite{Sa18a}, the shift on the first ($B$-)track is only used to construct the generators of an arbitrary f.g.\ group, but total sum of shifts is $0$ in the elements giving the embedding. Thus, $G = \langle a, b, f_0\rangle$, where $f_0$ is as above but ignores the $B'$-track, is clearly f.g.-universal, and a quotient of $G'$.
\end{proof}

\bibliographystyle{plain}
\bibliography{../../../bib/bib}{}

\def\ocirc#1{\ifmmode\setbox0=\hbox{$#1$}\dimen0=\ht0 \advance\dimen0
  by1pt\rlap{\hbox to\wd0{\hss\raise\dimen0
  \hbox{\hskip.2em$\scriptscriptstyle\circ$}\hss}}#1\else {\accent"17
  #1}\fi}\def\cprime{$'$}
\begin{thebibliography}{1}

\bibitem{KiRo90}
K.~H. Kim and F.~W. Roush.
\newblock On the automorphism groups of subshifts.
\newblock {\em Pure Mathematics and Applications}, 1(4):203--230, 1990.

\bibitem{Sa18c}
V.~{Salo}.
\newblock {Universal gates with wires in a row}.
\newblock {\em ArXiv e-prints}, September 2018.

\bibitem{Sa18a}
V.~{Salo}.
\newblock {Universal groups of cellular automata}.
\newblock {\em ArXiv e-prints}, August 2018.
\newblock Available at \url{https://arxiv.org/abs/1808.08697}.

\end{thebibliography}

\end{document}